\documentclass{amsart}

\usepackage{amssymb}
\usepackage{amsmath}
\usepackage{cite}
\usepackage{bbm}

\title{Instability and Singularity of Projective Hypersurfaces}
\author{Cheolgyu Lee}
\address[CL]{Center for Geometry and Physics, Institute for Basic Science (IBS), Pohang 37673, Republic of Korea}
\address[CL]{Department of Mathematics, POSTECH,
77 Cheongam-ro, Nam-gu, Pohang, Gyeongbuk, 37673, Korea.
}
\email{ghost279.math@gmail.com}
\thanks{This work was supported by IBS-R003-D1.}
\date{\today}

\newtheorem{theorem}{Theorem}[section]
\newtheorem{lemma}[theorem]{Lemma}

\begin{document}
\begin{abstract}
In this paper, we will show that the Hesselink stratification of a Hilbert scheme of hypersurfaces is independent of the choice of Pl\"ucker coordinate and there is a positive relation between the length of Hesselink's worst virtual 1-parameter subgroup and multiplicity of a projective hypersurface.
\end{abstract}
\maketitle
\section{Introduction}
 Let $k$ be an algebraically closed field. The table in \cite[p. 80]{GIT} says that a hypersurface over $k$ is unstable if and only if it has a singular point of some special tangent cone for some possibly small degree and dimension. 

If there is a singular point $p$ of a hypersurface $X$, we can measure its {\it magnitude} of singularity by its multiplicity $n_{p, X}$, which is defined to be the degree of the tangent cone $TC_{p}X$ as a subscheme of the tangent space $T_{p}X$(\cite[p. 258]{Joe}). In particular, $n_{p, X}\geq 2$ if and only if $p$ is a singular point of $X$. The number $n_{X}=\max_{p\in X}n_{p, X}$ is a geometric invariant.

On the other hands, we can measure how much a hypersurface is unstable. A hypersurface of a projective space is represented by a $k$ point of a Hilbert scheme $\textup{Hilb}^{P}(\mathbb{P}^{r}_{k})$, when $P$ is of the form 
\begin{displaymath}
P(t)=\binom{r+t}{r}-\binom{r+t-d}{r}
\end{displaymath}
for some $r$ and $d$. There is a canonical action of $\textup{SL}_{r+1}(k)$ on $\textup{Hilb}^{P}(\mathbb{P}^{r}_{k})$, possibly with a choice of Pl\"ucker coordinate
\begin{displaymath}
\textrm{Hilb}^{P}(\mathbb{P}^{r}_{k})\rightarrow\mathbb{P}\bigg( \bigwedge^{Q(d+t)}H^{0}(\mathbb{P}^{r}_{k}, \mathcal{O}_{\mathbb{P}^{r}_{k}}(d+t))\bigg)
\end{displaymath}
for some $t\geq 0$. For each choice of $t$, we have the unstable locus of a Hilbert scheme and Hesselink stratification of the unstable locus
\begin{equation}
\label{Hesselink}
 \textrm{Hilb}^{P}(\mathbb{P}^{r}_{k})^{\textrm{us}}_{d+t}=\coprod_{[\lambda], \delta >0}{E_{d+t, [\lambda], \delta}}
\end{equation}
where each $E_{d+t, [\lambda], \delta}$ is a constructible subset of the Hilbert scheme(\cite{Hesselink}).

In this paper, we will show that every Hesselink stratification in \eqref{Hesselink} is actually independent of the choice of the natural number $t$(Theorem~\ref{theorem1}). We will also show that there is a positive relation between $n_{H}$ and the pair $([\lambda], \delta)$ where $x\in E_{d, [\lambda], \delta}$ for {\it the} Hilbert point $x$ which represents $H$, for every projective hypersurface $H$ over $k$(Theorem~\ref{result}). This is given by \eqref{resulteqn}, which is a sharp inequality.
\section{Preliminaries and Notations}
A sequence of closed immersions which map a Hilbert Scheme $\textrm{Hilb}^{P}(\mathbb{P}^{r}_{k})$ into projective spaces is given in \cite[(3.4)]{Gotzmann}. If $g_{P}$ is the Gotzmann number corresponding to $P$, then for any $d\geq g_{P}$, there is a closed immersion
\begin{displaymath}
\phi_{d}:\textrm{Hilb}^{P}(\mathbb{P}^{r}_{k})\rightarrow\mathbb{P}\bigg( \bigwedge^{Q(d)}H^{0}(\mathbb{P}^{r}_{k}, \mathcal{O}_{\mathbb{P}^{r}_{k}}(d))\bigg).
\end{displaymath}
Here, 
\begin{displaymath}
Q(d)=h^{0}(\mathbb{P}^{r}_{k}, \mathcal{O}_{\mathbb{P}^{r}_{k}}(d))-P(d).
\end{displaymath}
Let $S=k[x_{0}, \ldots, x_{r}]$ be graded by the degree of polynomial. Then we have natural isomorphism $S \cong \oplus_{i\geq 0} H^{0}(\mathbb{P}^{r}_{k}, \mathcal{O}_{\mathbb{P}^{r}_{k}}(i))$ as graded rings over $k$. \\
For any closed point $x\in \textrm{Hilb}^{P}(\mathbb{P}^{r}_{k})$ which represents a saturated homogeneous ideal $I\subset S$, $\phi_{d}(x)=[I^{\wedge Q(d)}]$. We can describe the set of closed points in the image of $\phi_{d}$ explicitly:
\begin{displaymath}
\phi_{d}\left( \textrm{Hilb}^{P}(\mathbb{P}^{r}_{k})\right) = \lbrace V\in \textrm{Gr} ( S_{d}  ,Q(d) )|\dim_{k}S_{1}V\leq \dim_{k}S_{1}U, \forall U\in \textrm{Gr} ( S_{d}  ,Q(d) )\rbrace.
\end{displaymath}
Here we consider $\textrm{Gr} ( S_{d}  ,Q(d) )$ as a closed subscheme of $\mathbb{P}\left( \bigwedge^{Q(d)} S_{d} \right)$ via Pl\"ucker embedding.\\
Now $\mathbb{P}\left( \bigwedge^{Q(d)} S_{d} \right)$ admits a canonical $G:=\textrm{GL}_{r+1}(k)$-action which stabilizes the image of $\phi_{d}$ so that we can discuss about the unstable locus of a Hilbert Scheme via induced $H=\textup{SL}_{r+1}(k)$ action.\cite[Theorem 1.19 and Theorem 2.1, 2nd Chpater]{GIT}. We call $\phi_{d}(x)$ a $d$'th Hilbert point for any $x\in\textrm{Hilb}^{P}(\mathbb{P}^{r}_{k})$ and $d\geq g_{P}$.
\subsection{Hesselink Stratification of a Hilbert Scheme and State Polytope}
We can define a real-valued norm $\Vert\cdot \Vert$ on the group of every 1-parameter subgroups $\Gamma(G)$ of $G$, which satisfies conditions in \cite[p.305]{Kempf}. Let $T_{0}$ be the maximal torus of $G$ which consists of all diagonal matrices. Then $\Gamma(T_{0})\cong \mathbb{Z}^{r+1}$ so there is the Euclidean norm on $\Gamma(T_{0})\otimes \mathbb{R}$, associated with the standard basis given by $\{\lambda_{i}|0\leq i\leq r\}$ satisfying
\begin{displaymath}
 \lambda_{i}(t)_{ab}=\left\lbrace\begin{array}{ll}
                     t & a=b=i\\
                     1 & a=b\neq i \\
                     0 & a\neq b\end{array} \right. .
\end{displaymath}
 For any $g\in G$ and $\lambda\in\Gamma(G)$, let $g\star\lambda\in\Gamma(G)$ be the 1-parameter subgroup which maps $t\in\mathbb{G}_{m}$ to $g\lambda(t)g^{-1}$. The norm $\Vert\cdot\Vert$ is invariant under the action of the Weyl group of $T_{0}$ in $G$.  Thus this norm can be extended to $\Gamma(G)$ via conjugation by \cite[Corollary A, p. 135]{Humphreys}. Let $T=H\cap T_{0}$, which is a maximal torus of $H$.
 
Suppose $\lambda'\in\Gamma(G)$ and $V$ be a $G$-representation. There is a decomposition
\begin{displaymath}
V=\bigoplus_{i\in\mathbb{Z}}V_{i}
\end{displaymath}
where $V_{i}=\{w\in V|\lambda'(t).w=t^{i}w\}$(\cite[Proposition 4.7.]{Mukai})and there is a decomposition $v=\sum_{i\in\mathbb{Z}}v_{i}$ with $v_{i}\in V_{i}$ so that we can explain the value $\mu(v, \lambda')$ as follows:
\begin{displaymath}
\mu(v, \lambda')=\min\{i\in\mathbb{Z}|v_{i}\neq 0\}.
\end{displaymath}
For any unstable $d$'th Hilbert point $\phi_{d}(x)$, any two indivisible worst 1-parameter subgroups of $\phi_{d}(x)$ are conjugate so that we can measure the magnitude of instability of $\phi_{d}(x)$ by the conjugacy classes of indivisible 1-parameter subgroups\cite[Theorem 3.4]{Kempf}. Let $\Lambda_{x, d}$ be the set of all indivisible worst 1-parameter subgroups of $\phi_{d}(x)$. That is, 
\begin{displaymath}
\Lambda_{x, d}=\bigg\lbrace\lambda\in\Gamma(H)\bigg\vert \lambda\textrm{ is indivisible and } \frac{\mu(\phi_{d}(x), \lambda)}{\Vert \lambda \Vert} =\max_{\lambda'\in\Gamma(H)}\frac{\mu(\phi_{d}(x), \lambda')}{\Vert \lambda' \Vert}. \bigg\rbrace.
\end{displaymath}
The set $\Lambda_{x, d}$ is non-empty. Moreover, 
\begin{displaymath}
E_{d, [\lambda], \delta}=\bigg\lbrace x\in \textrm{Hilb}^{P}(\mathbb{P}^{r}_{k})\bigg\vert \Lambda_{x, d}\cap [\lambda]\neq\emptyset,\quad \frac{\mu(\phi_{d}(x), \lambda_{\textup{max}})}{\Vert \lambda_{\textup{max}} \Vert}=\delta,\textrm{ }\forall \lambda_{\textup{max}}\in\Lambda_{x, d}. \bigg\rbrace
\end{displaymath}
is a constructible subset of $ \textrm{Hilb}^{P}(\mathbb{P}^{r}_{k}) $ for a conjugacy class $[\lambda]$ containing an indivisible 1-parameter subgroup $\lambda$ of $T$\cite{Hesselink}.
 Consequently, there is a stratification
\begin{displaymath}
 \textrm{Hilb}^{P}(\mathbb{P}^{r}_{k})^{\textrm{us}}_{d}=\coprod_{[\lambda], \delta >0}{E_{d, [\lambda], \delta}}
\end{displaymath}
for each integer $d\geq g_{P}$.

The $T_{0}$ action induced by the G action on $V_{d}^{P}=\wedge^{Q(d)} S_{d}$ has decomposition
\begin{displaymath}
V_{d}^{P}=\bigoplus_{m\in M_{dQ(d)}}V_{m, d}^{P}
\end{displaymath}
where $V_{m, d}^{P}$ is a $k$-subspace of $\wedge^{Q(d)}S_{d}$ which is spanned by
\begin{displaymath}
\bigg\lbrace\bigwedge_{i=1}^{Q(d)} m_{i}\bigg\vert m_{i}\in M_{d} \textrm{ for all }1\leq i\leq Q(d) \textrm{ and } \prod_{i=1}^{Q(d)}m_{i}=m \bigg\rbrace.
\end{displaymath}
Here $M_{d}$ is the set of monomials of degree $d$ in $S$. This decomposition coincides with weight decomposition of the $T_{0}$ action because wedges of monomials are exactly the eigenvectors of the action. Multiplicative semi-group of monomials in $S$ is isomorphic to $\mathbb{N}^{r+1}$ so that it is naturally embedded in $X(T_{0})\cong \mathbb{Z}^{r+1}$ via morphism sending an eigenvector to its weight, which is given by a character of $T_{0}$. There is also a natural isomorphism $\eta:\Gamma(T_{0})\rightarrow X(T_{0})$ defined by a basis $\{\lambda_{i}\}_{i=0}^{r}$ of $\Gamma(T_{0})$ and its dual basis with respect to the pairing in\cite[p.304]{Kempf}. For arbitrary $d$'th Hilbert point $\phi_{d}(x)$,
\begin{displaymath}
\phi_{d}(x)=\left[ \sum_{m\in M_{dQ(d)}} \phi_{d}(x)_{m} \right]
\end{displaymath}
for unique choice of sequence (up to scalar multiplication) $\{\phi_{d}(x)_{m}\}_{m\in M_{dQ(d)} }$ satisfying $\phi_{d}(x)_{m}\in V_{m, d}^{P}$. Let
\begin{displaymath}
\Xi_{x, d}=\lbrace m\in M_{dQ(d)}|\phi_{d}(x)_{m}\neq 0 \rbrace.
\end{displaymath}
This set is called the state of the Hilbert point $\phi_{d}(x)$(\cite{Kempf} and \cite{Ian}). For $f\in S_{d}$ let $\{f_{m}\}_{m\in M_{d}}$ be the sequence satisfying
\begin{displaymath}
f=\sum_{m\in M_{d}}f_{m}m.
\end{displaymath}
 Let $\Delta_{x, d}$ be the convex hull of $\Xi_{x, d}\otimes \mathbb{R}$ in $X(T_{0})\otimes\mathbb{R}\cong\mathbb{R}^{r+1}$ and $|\Delta_{x, d}|$ be the distance between $\Delta_{x, d}$ and $\xi_{d}:= \frac{dQ(d)}{r}\mathbbm{1}\in X(T_{0})\otimes\mathbb{R}$. $\mathbbm{1}\in\mathbb{R}^{r+1}$ is the vector whose coefficients are 1.  There is also a unique point $h_{x, d}$ in $\Delta_{x, d}$ satisfying $\Vert h_{x, d}-\xi_{d}\Vert =|\Delta_{x, d}|$ and unique  indivisible $\lambda_{x, d}\in\Gamma(T)$ satisfying $\eta(\lambda_{x, d})\otimes\mathbb{R}=qh_{x. d}$ for some $q\in\mathbb{R}^{+}$. Now we are ready to state
\begin{theorem}
\label{base}
Suppose $\lambda\in \Gamma(H)$ is an indivisible worst 1-parameter subgroup of $\phi_{d}(x)$ for some  $d\geq g_{P}$ and $x\in \textup{Hilb}^{P}(\mathbb{P}^{r}_{k})^{\textup{us}}_{d}$. Then,
\begin{displaymath}
\phi_{d}(x)\in E_{d, [\lambda_{g.x, d}], |\Delta_{g.x, d}|}
\end{displaymath}
For every $g\in G$ satisfying the image of $g\star\lambda$ is a subset of $T$. Such a $g\in G$ exists for any choice of $x$ and $d$. In particular, $\lambda\in[\lambda_{g.x, d}]$ and $|\Delta_{g.x, d}|=\max_{h\in G}{|\Delta_{h.x, d}|}$.
\end{theorem}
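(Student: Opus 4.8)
The plan is to conjugate $\lambda$ into the maximal torus $T$, use $G$-equivariance to transport the property of being a worst one-parameter subgroup, identify the resulting torus subgroup with $\lambda_{g.x,d}$ through the convex geometry of the state polytope, and then read off membership in the stratum and the maximality statement from the definitions. For the first point I would invoke the standard fact that every one-parameter subgroup of the reductive group $H$ is conjugate, by an element of $H\subseteq G$, into the fixed maximal torus $T$ \cite{Humphreys}; this produces a $g\in G$ with the image of $g\star\lambda$ contained in $T$, which already settles the existence claim. I would then use three compatible facts: the standard identity $\mu(g.v,g\star\lambda)=\mu(v,\lambda)$, the conjugation-invariance $\Vert g\star\lambda\Vert=\Vert\lambda\Vert$ recalled above, and the $G$-equivariance $\phi_{d}(g.x)=g.\phi_{d}(x)$. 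Since $\mu'\mapsto g\star\mu'$ is an automorphism of $\Gamma(H)$ preserving the norm and, after substituting $v=\phi_{d}(x)$, the relevant values of $\mu$, it follows that $\lambda$ is an indivisible worst one-parameter subgroup of $\phi_{d}(x)$ precisely when $g\star\lambda$ is an indivisible worst one-parameter subgroup of $\phi_{d}(g.x)$; in particular $g\star\lambda\in\Lambda_{g.x,d}$, with image in $T$.

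The main step is to show $g\star\lambda=\lambda_{g.x,d}$. Writing $y=g.x$, for $\nu\in\Gamma(T)\otimes\mathbb{R}$ the weight decomposition of $V_{d}^{P}$ yields $\mu(\phi_{d}(y),\nu)=\min_{m\in\Xi_{y,d}}\langle m,\eta(\nu)\rangle$; since every $m\in\Xi_{y,d}$ has coordinate sum $dQ(d)$ while $\eta(\nu)$ is traceless, $\langle m,\eta(\nu)\rangle=\langle m-\xi_{d},\eta(\nu)\rangle$, so $\mu(\phi_{d}(y),\nu)=\min_{p\in\Delta_{y,d}}\langle p-\xi_{d},\eta(\nu)\rangle$. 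Maximizing $\mu(\phi_{d}(y),\nu)/\Vert\nu\Vert$ over traceless directions is then exactly the problem of finding the point of the convex body $\Delta_{y,d}-\xi_{d}$ nearest the origin; as $\phi_{d}(x)$, hence $\phi_{d}(y)$, is unstable we have $\xi_{d}\notin\Delta_{y,d}$, and the supporting-hyperplane property of the nearest point $h_{y,d}-\xi_{d}$ shows that this maximum equals $|\Delta_{y,d}|$ and is attained exactly on the ray of directions from $\xi_{d}$ toward $h_{y,d}$, that is, on the ray of $\lambda_{y,d}$. On the other hand, by Kempf's theorem \cite[Theorem 3.4]{Kempf} the indivisible worst one-parameter subgroups of $\phi_{d}(y)$ form one conjugacy class, and this class meets $\Gamma(T)$ because every conjugacy class of one-parameter subgroups does; hence $\max_{\lambda'\in\Gamma(H)}\mu(\phi_{d}(y),\lambda')/\Vert\lambda'\Vert$ is already attained on $\Gamma(T)$, so the indivisible worst subgroup $g\star\lambda$, lying in $\Gamma(T)$, must be the unique indivisible lattice point on that ray, namely $\lambda_{y,d}$. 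Consequently $\lambda=g^{-1}\star\lambda_{g.x,d}\in[\lambda_{g.x,d}]$ and $\mu(\phi_{d}(y),g\star\lambda)/\Vert g\star\lambda\Vert=|\Delta_{g.x,d}|$.

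It remains to assemble the conclusion against the definition of the stratum. First $\lambda\in\Lambda_{x,d}\cap[\lambda_{g.x,d}]$, so this intersection is non-empty. Second, every $\lambda_{\textup{max}}\in\Lambda_{x,d}$ realizes the common value $\max_{\lambda'\in\Gamma(H)}\mu(\phi_{d}(x),\lambda')/\Vert\lambda'\Vert$, and by the first paragraph this equals $\mu(\phi_{d}(x),\lambda)/\Vert\lambda\Vert=\mu(\phi_{d}(y),g\star\lambda)/\Vert g\star\lambda\Vert=|\Delta_{g.x,d}|$. Hence $\phi_{d}(x)\in E_{d,[\lambda_{g.x,d}],|\Delta_{g.x,d}|}$; since the strata are pairwise disjoint, the same pair is forced for every admissible $g$. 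Finally, for the maximality: applying the computation of the second paragraph with $y$ replaced by an arbitrary $h.x$ gives $|\Delta_{h.x,d}|=\max_{\nu\in\Gamma(T)}\mu(\phi_{d}(h.x),\nu)/\Vert\nu\Vert=\max_{\lambda'\in\Gamma(H)}\mu(\phi_{d}(h.x),\lambda')/\Vert\lambda'\Vert=\max_{\lambda'\in\Gamma(H)}\mu(\phi_{d}(x),\lambda')/\Vert\lambda'\Vert$, the last equality by the conjugation-invariance noted above; thus $|\Delta_{h.x,d}|$ is independent of $h\in G$, so it equals $\max_{h\in G}|\Delta_{h.x,d}|=|\Delta_{g.x,d}|$.

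I expect the real obstacle to be the second paragraph: making the formula for $\mu(\phi_{d}(y),\nu)$ precise with the exact sign and pairing conventions fixed in the preliminaries, verifying that the translation by $\xi_{d}$ onto the traceless hyperplane is consistent with the definitions of $h_{y,d}$, $|\Delta_{y,d}|$ and $\lambda_{y,d}$, and using Kempf's conjugacy theorem to justify replacing the optimization over $\Gamma(H)$ by the optimization over $\Gamma(T)$. After that, the remaining steps are equivariance bookkeeping.
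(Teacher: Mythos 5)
Your overall route --- conjugate $\lambda$ into $T$, transport the worst-1-PS property via the identity $\mu(g.v,g\star\lambda)=\mu(v,\lambda)$ and conjugation-invariance of the norm, then identify $g\star\lambda$ with $\lambda_{g.x,d}$ through the nearest-point duality for the state polytope --- is precisely the content of the references the paper cites in lieu of a proof (Kempf's convexity lemma and conjugacy theorem, plus the state-polytope criterion), so your first two paragraphs supply more detail than the paper does and are essentially correct. Two reading points you silently fix and should make explicit: the paper's $\xi_{d}=\frac{dQ(d)}{r}\mathbbm{1}$ has to be read as $\frac{dQ(d)}{r+1}\mathbbm{1}$ for $\Delta_{x,d}-\xi_{d}$ to lie in the traceless hyperplane, and ``$\eta(\lambda_{x,d})\otimes\mathbb{R}=qh_{x,d}$'' has to be read as $q(h_{x,d}-\xi_{d})$, since otherwise $\lambda_{x,d}\notin\Gamma(T)$ and the formula $h_{x,d}=\frac{d}{r+1}\mathbbm{1}+\frac{\delta}{\Vert\lambda\Vert}\eta(\lambda)$ used in the proof of Theorem~\ref{result} would not hold. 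Also, the inference ``the conjugacy class meets $\Gamma(T)$, hence the maximum over $\Gamma(H)$ is attained on $\Gamma(T)$'' is not valid as stated (conjugating $\lambda'$ changes $\mu(\phi_d(y),\cdot)$); what saves you is that you have already produced $g\star\lambda\in\Lambda_{g.x,d}\cap\Gamma(T)$ directly.

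The genuine error is in your final paragraph. For arbitrary $h\in G$ you claim $\max_{\nu\in\Gamma(T)}\mu(\phi_{d}(h.x),\nu)/\Vert\nu\Vert=\max_{\lambda'\in\Gamma(H)}\mu(\phi_{d}(h.x),\lambda')/\Vert\lambda'\Vert$ and conclude that $|\Delta_{h.x,d}|$ is independent of $h$. That equality holds only when some worst 1-PS of $\phi_{d}(h.x)$ lies in $\Gamma(T)$, which you established for $h=g$ but which fails for general $h$; and the constancy conclusion is false --- if $|\Delta_{h.x,d}|$ did not depend on $h$, the optimization problem the paper extracts from Theorem~\ref{base} (and which Lemma~\ref{preserving} and Lemma~\ref{mainlem} exist to solve) would be vacuous. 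The repair is one-sided: for every $h$ and every $\nu\in\Gamma(T)$ one has $\mu(\phi_{d}(h.x),\nu)/\Vert\nu\Vert=\mu(\phi_{d}(x),h^{-1}\star\nu)/\Vert h^{-1}\star\nu\Vert\leq M:=\max_{\lambda'\in\Gamma(H)}\mu(\phi_{d}(x),\lambda')/\Vert\lambda'\Vert$, hence $|\Delta_{h.x,d}|\leq M$ for all $h$, while your second paragraph gives $|\Delta_{g.x,d}|=M$; together these yield $|\Delta_{g.x,d}|=\max_{h\in G}|\Delta_{h.x,d}|$ without the false claim. With that substitution the argument is complete and matches the approach the paper's citations encode.
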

\begin{proof}
\cite[Lemma 3.2., Theorem 3.4.]{Kempf} and \cite[Criterion 3.3]{Ian}.
\end{proof}
Theorem~\ref{base} means that a problem finding $E_{d, [\lambda], \delta}$ containing $\phi_{d}(x)$ is an optimization problem. That is, it is equivalent to the problem finding $g\in G$ maximalizng $|\Delta_{g.x, d}|$.

\subsection{The Multiplicity of a Projective Hypersurface at a point}
Suppose $p=[1:0:\ldots :0]\in \mathbb{P}^{r}_{k}$ where $X$ is a projective hypersurface corresponding to the homogeneous ideal generated by the single generator $f\in S_{d}$, without loss of generality. Multiplicity of $X$ at a point $p$ is 
\begin{equation}
\label{mdef}
n_{p, X}=\min\bigg\{t\in\mathbb{N}\bigg\vert x_{0}^{d-t}|f \bigg\}.
\end{equation}
This is an extrinsic definition. However, we have also an intrinsic definition. $n_{p, X}$ is the degree of the tangent cone of $X$ at $p$ as a subscheme of the tangent space $T_{p}(X)$ of $X$ at $p$.(\cite[p. 258]{Joe})
\section{Computation of Worst State Polytope}
Let $\langle , \rangle$ be a standard inner-product on $X(T)\otimes\mathbb{R}$ with respect to the basis $\{\eta(\lambda_{i})\}_{i=0}^{r}$. For $\lambda\in\Gamma(T)$, let's define a monomial order $<_{\lambda}$ as follows:
\begin{displaymath}
m<_{\lambda}m' \Longleftrightarrow \bigg\langle \eta(\lambda)\otimes\frac{1}{\Vert\lambda\Vert}, m\bigg\rangle < \bigg\langle \eta(\lambda)\otimes\frac{1}{\Vert\lambda\Vert}, m'\bigg\rangle \textrm{ or}
\end{displaymath}
\begin{displaymath}
\left[\bigg\langle \eta(\lambda)\otimes\frac{1}{\Vert\lambda\Vert}, m\bigg\rangle = \bigg\langle \eta(\lambda)\otimes\frac{1}{\Vert\lambda\Vert}, m\bigg\rangle \textrm{ and } m<_{\textrm{lex}}m'\right].
\end{displaymath}
Here $<_{\textrm{lex}}$ is a lexicographic order with respect to the term order $x_{i}<x_{i+1}$.
We can describe $\Delta_{x, t}$ for $t\geq g_{P}$ with these notations.
\begin{equation}
\label{asymptotic}
|\Delta_{x, t}|=\max_{\lambda\in\Gamma(T)}\min_{m\in \Xi_{x, t}}\bigg\langle \eta(\lambda)\otimes\frac{1}{\Vert\lambda\Vert}, m\bigg\rangle.
\end{equation}
The value 
\begin{displaymath}
\min_{m\in \Xi_{x, t}}\bigg\langle \eta(\lambda)\otimes\frac{1}{\Vert\lambda\Vert}, m\bigg\rangle
\end{displaymath}
is equal to
\begin{displaymath}
\frac{\mu(\phi_{t}(x), \lambda)}{\Vert\lambda\Vert}.
\end{displaymath}
\subsection{Stability of Hesselink Stratifications of a Hilbert Scheme of Hypersurfaces} In this subsection,  assume that
\begin{equation}
\label{assumption}
P(t)=\binom{r+t}{r}-\binom{r+t-d}{r}.
\end{equation}
Considering a lex-segment homogeneous ideal whose Hilbert polynomial is $P$, we get $g_{P}=d$(\cite{Gotzmann}). This means that $\textup{Hilb}^{P}(\mathbb{P}^{r}_{k})$ is a parameter space of every hypersurface of $\mathbb{P}^{r}_{k}$ defined by a homogeneous polynomial of degree $d$. Also, we have a Hesselink stratification

\begin{displaymath}
 \textrm{Hilb}^{P}(\mathbb{P}^{r}_{k})^{\textrm{us}}_{t}=\coprod_{[\lambda], \delta > 0}{E_{t, [\lambda], \delta}}
\end{displaymath}
for each integer $t\geq d$. Before we state a new theorem, let 
\begin{displaymath}
\tau(\delta, D)=|M_{D}|\delta
\end{displaymath}
for $D\in \mathbb{N}$ and $\delta\in\mathbb{R}^{+}$.
\begin{theorem}
\label{theorem1}
For above stratifications, there are identities 
\begin{displaymath}
E_{d+D, [\lambda], \tau(\delta, D) }=E_{d, [\lambda], \delta}
\end{displaymath}
as constructible subsets of $\textup{Hilb}^{P}(\mathbb{P}^{r}_{k})$. Consequently, all Hesselink stratifications of  $\textup{Hilb}^{P}(\mathbb{P}^{r}_{k})^{\textrm{us}}_{t}$ corresponding to $t\geq d$ coincide. In particular, $\textup{Hilb}^{P}(\mathbb{P}^{r}_{k})^{\textup{us}}_{d+D}=\textup{Hilb}^{P}(\mathbb{P}^{r}_{k})^{\textup{us}}_{d}$ as sets.
\end{theorem}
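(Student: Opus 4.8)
The plan is to reduce the entire statement to the single identity
\[
\mu(\phi_{d+D}(x),\lambda)=|M_{D}|\cdot\mu(\phi_{d}(x),\lambda)\qquad\text{for every }\lambda\in\Gamma(H),
\]
valid for \emph{every} point $x\in\textup{Hilb}^{P}(\mathbb{P}^{r}_{k})$. Granting it: $\Vert\lambda\Vert$ is intrinsic to $\Gamma(H)$ and does not depend on the embedding parameter, so $\lambda\mapsto\mu(\phi_{d+D}(x),\lambda)/\Vert\lambda\Vert$ is $|M_{D}|$ times $\lambda\mapsto\mu(\phi_{d}(x),\lambda)/\Vert\lambda\Vert$; two positively proportional functions have the same set of indivisible maximizers and their maxima differ by the same factor, hence $\Lambda_{x,d+D}=\Lambda_{x,d}$ and, for any $\lambda_{\textup{max}}$ in this common set, $\mu(\phi_{d+D}(x),\lambda_{\textup{max}})/\Vert\lambda_{\textup{max}}\Vert=|M_{D}|\,\mu(\phi_{d}(x),\lambda_{\textup{max}})/\Vert\lambda_{\textup{max}}\Vert$. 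Substituting into the description of $E_{t,[\lambda],\delta}$ recalled in Section~2 gives, for every conjugacy class $[\lambda]$ and every $\delta>0$, that $x\in E_{d+D,[\lambda],|M_{D}|\delta}$ if and only if $x\in E_{d,[\lambda],\delta}$, i.e. $E_{d+D,[\lambda],\tau(\delta,D)}=E_{d,[\lambda],\delta}$ as subsets; both being constructible, they coincide as constructible subsets. Taking unions over $([\lambda],\delta)$ then gives $\textup{Hilb}^{P}(\mathbb{P}^{r}_{k})^{\textup{us}}_{d+D}=\textup{Hilb}^{P}(\mathbb{P}^{r}_{k})^{\textup{us}}_{d}$ as sets. (In the language of Section~2 the identity reads $|\Delta_{h.x,d+D}|=|M_{D}|\,|\Delta_{h.x,d}|$ for all $h\in G$, by~\eqref{asymptotic}.)

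To prove the identity I would first reduce to a diagonal $\lambda\in\Gamma(T_{0})$, using that both sides are equivariant in $(x,\lambda)$ (via $\mu(h.v,h\star\lambda)=\mu(v,\lambda)$ and the compatibility of $\phi_{t}$ with the $G$-action) and that every element of $\Gamma(H)$ is $G$-conjugate into $T_{0}$. Write $x$ as the hypersurface $V(f)$ with $f\in S_{d}$; its saturated ideal is the principal ideal $(f)$, so $I_{t}=f\cdot S_{t-d}$ for every $t\geq d$. In particular $\phi_{d}(x)=[f]$, so $\mu(\phi_{d}(x),\lambda)$ equals the least $\lambda$-weight $w$ of a monomial occurring in $f$; and, using the basis $\{fm:m\in M_{D}\}$ of $I_{d+D}$, we have $\phi_{d+D}(x)=\big[\bigwedge_{m\in M_{D}}(fm)\big]$. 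Expanding $\bigwedge_{m\in M_{D}}(fm)=\bigwedge_{m\in M_{D}}\big(\sum_{n\in M_{d}}f_{n}\,nm\big)$ multilinearly and grouping terms by $T_{0}$-weight, the least weight occurring is $|M_{D}|\,w+\langle\eta(\lambda),\mathrm{wt}(\pi_{D})\rangle$, where $\pi_{D}=\prod_{m\in M_{D}}m$ — \emph{provided} the weight-minimal contribution survives, which is the only point needing an argument.

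That contribution is $\bigwedge_{m\in M_{D}}(f_{\textup{low}}\,m)$, with $f_{\textup{low}}$ the sum of the $\lambda$-minimal monomials of $f$, and it is nonzero because $S$ is an integral domain: a relation $f_{\textup{low}}\cdot\sum_{m}c_{m}m=0$ forces $\sum_{m}c_{m}m=0$, so the $|M_{D}|$ vectors $f_{\textup{low}}m$ are linearly independent. Hence $\mu(\phi_{d+D}(x),\lambda)=|M_{D}|\,\mu(\phi_{d}(x),\lambda)+\langle\eta(\lambda),\mathrm{wt}(\pi_{D})\rangle$. Finally $\mathrm{wt}(\pi_{D})=\frac{D|M_{D}|}{r+1}\mathbbm{1}$ by the symmetry of $M_{D}$ under permutations of $x_{0},\dots,x_{r}$, and $\langle\eta(\lambda),\mathbbm{1}\rangle=\sum_{i=0}^{r}\eta(\lambda)_{i}=0$ because $\lambda$ takes values in $H=\textup{SL}_{r+1}(k)$; combined with $|M_{D}|=\binom{r+D}{r}=Q(d+D)$, this yields the identity and with it the theorem. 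The main obstacle is exactly this non-cancellation check together with bookkeeping the constant weight-shift $\mathrm{wt}(\pi_{D})$; once $I_{t}=f\cdot S_{t-d}$ is noted, everything else is routine.
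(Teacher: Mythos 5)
Your proposal is correct, and its skeleton matches the paper's: both reduce the theorem to the scaling identity $\mu(\phi_{d+D}(x),\lambda)=|M_{D}|\,\mu(\phi_{d}(x),\lambda)$ (equivalently $|\Delta_{g.x,d+D}|=\tau(|\Delta_{g.x,d}|,D)$), using that $I_{d+D}=f\cdot S_{D}$ so that $\phi_{d+D}(x)=\bigl[\bigwedge_{m\in M_{D}}fm\bigr]$, and then feed this into the Kempf--Hesselink description of the strata. Where you differ is in how the key identity is established. The paper fixes $\gamma\in\Gamma(T)$, totally orders monomials by $<_{\gamma}$, writes the Pl\"ucker coordinates of $\bigwedge_{m}(g.f)m$ as minors of the matrix $A_{g.f,D}$, and shows that every minor of $\gamma$-weight below the target vanishes because of a block of zeros; it then takes the achievability of the bound for granted and silently uses that $\bigl\langle\eta(\gamma),\prod_{i}m_{i,D}\bigr\rangle=0$. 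You instead argue directly on the weight decomposition: multilinearity gives the lower bound on the weights occurring, the extremal component is identified as $\bigwedge_{m}(f_{\mathrm{low}}m)$, and its non-vanishing is proved by the integral-domain argument (a relation $f_{\mathrm{low}}\sum c_{m}m=0$ forces $\sum c_{m}m=0$). This buys you two things the paper leaves implicit: an explicit verification that the minimal weight is actually attained in the state $\Xi_{x,d+D}$ (the paper's vanishing-minor argument only bounds the state from below), and an explicit accounting of the constant shift $\mathrm{wt}(\pi_{D})=\frac{D|M_{D}|}{r+1}\mathbbm{1}$ together with the reason it disappears, namely $\langle\eta(\lambda),\mathbbm{1}\rangle=0$ for $\lambda\in\Gamma(H)$. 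The paper's version, on the other hand, is phrased uniformly over all $g\in G$ and plugs directly into its Theorem~\ref{base}; your reduction via conjugating $\lambda$ into $T_{0}$ and invoking $\mu(g.v,g\star\lambda)=\mu(v,\lambda)$ accomplishes the same thing. Both arguments are sound.
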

\begin{proof}
We will show that $E_{d+D, [\lambda], \tau(\delta, D) }=E_{d, [\lambda], \delta}$ as a subset of $\textup{Hilb}^{P}(\mathbb{P}^{r}_{k})$. Suppose that $x\in E_{d, \left[\lambda\right], \delta}\cap  E_{d+D, \left[\lambda'\right], \delta'}$ for some $D\in\mathbb{N}\setminus\{0\}$, $\lambda,\lambda'\in\Gamma(T)$ and $\delta, \delta'>0$. Fix $\gamma\in\Gamma(T)$. For each $t\geq d$, we have a unique sequence $\{m_{i, t}\}_{i=1}^{|M_{t}|}$ satisfying  $m_{i, t}\in M_{t}$ for all $1\leq i\leq |M_{t}|$ and $m_{i, t}>_{\gamma}m_{i+1, t}$ for all $1\leq i<|M_{t}|$. Let $f$ be the unique generator (up to scalar) of the saturated ideal represented by $x$. Then we can write $[g.f]=g.\phi_{d}(x)$ for arbitrary $g\in G$ as follows:
\begin{displaymath}
g.f=\sum_{i=1}^{|M_{d}|}(g.f)_{i}m_{i, d}.
\end{displaymath}
Therefore,
\begin{equation}
\label{lowdeg}
\min_{m\in\Xi_{g.x, d}} \bigg\langle \eta(\gamma)\otimes\frac{1}{\Vert\gamma\Vert}, m\bigg\rangle=\bigg\langle \eta(\gamma)\otimes\frac{1}{\Vert\gamma\Vert}, m_{\alpha, d}\bigg\rangle
\end{equation}
where $\alpha=\max\{1\leq i\leq |M_{d}|\textrm{ }|(g.f)_{i}\neq 0\}$. \\
$V_{d+D}^{P}$ has a basis $\{ \wedge_{j=1}^{|M_{D}|}m_{a_{j}, d+D}|a_{j}<a_{j+1}, 1\leq a_{j}\leq |M_{D}|\}$. Now the coefficient of $\wedge_{j=1}^{|M_{D}|}m_{a_{j}, d+D}$ in $g.\phi_{d+D}(x)$ with this basis can be written as follows:
\begin{displaymath}
\left[g.\bigwedge_{i=1}^{|M_{D}|}m_{i, D}f \right]_{\wedge_{j=1}^{|M_{D}|}m_{a_{j}, d+D}}=\left[\bigwedge_{i=1}^{|M_{D}|}(g.m_{i, D})(g.f) \right]_{\wedge_{j=1}^{|M_{D}|}m_{a_{j}, d+D}}
\end{displaymath}
\begin{displaymath}
=(\det g)^{\binom{r+D}{r}}\left[\bigwedge_{i=1}^{|M_{D}|}m_{i, D}(g.f) \right]_{\wedge_{j=1}^{|M_{D}|}m_{a_{j}, d+D}}
\end{displaymath}
\begin{displaymath}
=(\det g)^{\binom{r+D}{r}}\sum_{\sigma\in P_{r+1}}\textup{sgn}(\sigma)\prod_{i=1}^{|M_{D}|}\left[m_{i, D}(g.f)\right]_{m_{a_{\sigma(i)}, d+D}}
\end{displaymath}
where $P_{r+1}$ is the permutation group on $r+1$. This means that 
\begin{displaymath}
g.\phi_{d+D}(x)=\left[\sum_{1\leq a_{1}< a_{2}<\ldots<  a_{|M_{D}|}\leq |M_{d+D}|} A_{g.f, D}^{a_{1}, \ldots, a_{|M_{D}|}} \bigwedge_{j=1}^{|M_{D}|}m_{a_{j}, d+D}\right]
\end{displaymath}
where $A_{g.f, D}^{a_{1}, \ldots, a_{|M_{D}|}}$ is a $|M_{D}|\times |M_{D}|$-minor of matrix $A_{g.f, D}$ with the choice of $a_{i}$'th columns. Here $[A_{g.f, D}]_{ij}=[m_{i, D}(g.f)]_{m_{j, d+D}}$ for all $1\leq i \leq |M_{D}|$ and $1\leq j\leq |M_{d+D}|$. If a strictly increasing sequence of integers $\{a_{j}\}_{j=1}^{|M_{D}|}$ satisfies $1\leq a_{i}\leq |M_{d+D}|$ and 

\begin{displaymath}
\prod_{i=1}^{|M_{D}|}m_{a_{i}, d+D}<_{\gamma} m_{\alpha, d}^{|M_{D}|}\prod_{i=1}^{|M_{D}|}m_{i, D},
\end{displaymath}
then $\{i\in\mathbb{Z}|1\leq i\leq |M_{D}|,\textrm{ }m_{a_{i}, d+D}<_{\gamma} m_{\alpha, d}m_{i, D}\}\neq \emptyset$. Let
\begin{displaymath}
\beta=\max\{i\in\mathbb{Z}|1\leq i\leq |M_{D}|,\textrm{ }m_{a_{i}, d+D}<_{\gamma} m_{\alpha, d}m_{i, D}\}.
\end{displaymath}
By the definition of $\alpha$, 
\begin{displaymath}
[A_{g.f, D}]_{ia_{j}}=0
\end{displaymath}
for all $1\leq i\leq \beta$ and $\beta\leq j\leq |M_{D}|$. This means that
\begin{displaymath}
A_{g.f, D}^{a_{1}, \ldots, a_{|M_{D}|}}= 0.
\end{displaymath}
Therefore, 
\begin{displaymath}
\min_{m\in\Xi_{g.x, d+D}} \bigg\langle \eta(\gamma)\otimes\frac{1}{\Vert\gamma\Vert}, m\bigg\rangle = \bigg\langle \eta(\gamma)\otimes\frac{1}{\Vert\gamma\Vert}, m_{\alpha, d}^{|M_{D}|}\prod_{i=1}^{|M_{D}|}m_{i, D} \bigg\rangle
\end{displaymath}
\begin{displaymath}
=\tau \left( \bigg\langle \eta(\gamma)\otimes\frac{1}{\Vert\gamma\Vert}, m_{\alpha, d}\bigg\rangle , D\right)=\tau \left( \min_{m\in\Xi_{g.x, d}} \bigg\langle \eta(\gamma)\otimes\frac{1}{\Vert\gamma\Vert}, m\bigg\rangle , D\right)
\end{displaymath}
by \eqref{lowdeg}. Taking a $\gamma\in\Gamma(T)$ maximalizing above values, we can verify that
\begin{displaymath}
|\Delta_{g.x,d+D}|=\tau(|\Delta_{g.x, d}|, D)
\end{displaymath}
by \eqref{asymptotic}. Taking $g\in G$ maximalizing these values, we have $[\lambda']=[\lambda]=[\lambda_{g.x, d}]$ and $\delta'=\tau(\delta, D)$ by Theorem~\ref{base}.
\end{proof}

\subsection{Optimization Preserving Property of Lower-Triangular matrix}
Once we choose a solution $g\in G$ to the optimization problem arising from Theorem~\ref{base}, we can check that $lg$ is also a solution for an arbitrary lower-triangular matrix $l$ if we assume that
\begin{displaymath}
\eta(\lambda)=(a_{0}, a_{1}, \ldots, a_{r})\in X(T_{0})\otimes\mathbb{R}\cong \mathbb{R}^{r+1}
\end{displaymath}
and $a_{i}\leq a_{i+1}$ for all $0\leq i< r$. Furthermore, $h_{g.x, d}=h_{lg.x, d}$. Actually, this is a consequence of \cite[Theorem4.2.]{Kempf}.
\begin{lemma}
\label{preserving}
Suppose $g$ satisfies
\begin{displaymath}
|\Delta_{g.x, d}|=\max_{h\in G}|\Delta_{h.x, d}|,
\end{displaymath}
\begin{displaymath}
\eta(\lambda_{g.x, d})=(a_{0}, \ldots, a_{r})\in X(T_{0})\otimes\mathbb{R}\cong\mathbb{R}^{r+1}
\end{displaymath}
and $a_{i}\leq a_{i+1}$ for all $0\leq i < r$. Then, $\lambda_{lg.x, d}=\lambda_{g.x, d}\in \Lambda_{g.x, d}$ for every lower-triangular matrix $l\in H$.
\end{lemma}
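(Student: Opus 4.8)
The plan is to identify the distinguished torus datum $\lambda:=\lambda_{g.x, d}$ as a genuine indivisible worst $1$-parameter subgroup \emph{both} of $\phi_{d}(g.x)$ and of $\phi_{d}(lg.x)$, and then to use that such a $1$-parameter subgroup inside a fixed maximal torus is unique. Recall first that $\phi_{d}$ is $G$-equivariant, so $\phi_{d}(lg.x)=l.\phi_{d}(g.x)$. The role of the hypothesis $a_{i}\leq a_{i+1}$ is this: writing $\lambda_{g.x, d}(t)$ as the diagonal matrix with entries $t^{a_{0}}, \ldots, t^{a_{r}}$, conjugation by it scales the $(i,j)$ matrix unit by $t^{a_{i}-a_{j}}$, and for a lower-triangular entry ($i\geq j$) the exponent $a_{i}-a_{j}$ is non-negative; hence every lower-triangular $l\in H$ lies in the parabolic subgroup $P(\lambda_{g.x, d})$, and $l$ (as does $l^{-1}$, again lower-triangular) preserves the descending filtration of $V_{d}^{P}$ by $\lambda_{g.x, d}$-weight.

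Next I would transport optimality of $\lambda_{g.x, d}$ along $l$. Since $l$ and $l^{-1}$ both preserve the $\lambda_{g.x, d}$-weight filtration, $\mu(\phi_{d}(lg.x), \lambda_{g.x, d})=\mu(\phi_{d}(g.x), \lambda_{g.x, d})$; and for any $\mu'\in\Gamma(H)$ one has $\mu(\phi_{d}(lg.x), \mu')=\mu(\phi_{d}(g.x), l^{-1}\star\mu')$ together with $\Vert l^{-1}\star\mu'\Vert=\Vert\mu'\Vert$, so the maximal value of $\mu(\,\cdot\,, \mu')/\Vert\mu'\Vert$ agrees for $\phi_{d}(g.x)$ and $\phi_{d}(lg.x)$. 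Because $g$ maximizes $|\Delta_{g.x, d}|$ over $G$, Theorem~\ref{base} (and the discussion after it) gives $\lambda_{g.x, d}\in\Lambda_{g.x, d}$, i.e.\ $\lambda_{g.x, d}$ attains that maximal value for $\phi_{d}(g.x)$; combining the previous two facts, it attains it for $\phi_{d}(lg.x)$ too, so $\lambda_{g.x, d}\in\Lambda_{lg.x, d}$ — this is exactly \cite[Theorem 4.2.]{Kempf} applied to $l\in P(\lambda_{g.x, d})$. Feeding $\lambda_{g.x, d}\in\Gamma(T)$ into \eqref{asymptotic} then yields $|\Delta_{lg.x, d}|\geq\mu(\phi_{d}(lg.x), \lambda_{g.x, d})/\Vert\lambda_{g.x, d}\Vert=|\Delta_{g.x, d}|=\max_{h\in G}|\Delta_{h.x, d}|$, and the reverse inequality is automatic, so $lg$ also maximizes $|\Delta_{\,\cdot\, .x, d}|$ over $G$; hence Theorem~\ref{base} applies to $lg$ as well and $\lambda_{lg.x, d}\in\Lambda_{lg.x, d}$.

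Finally I would close with a uniqueness observation: a Hilbert point $w$ has at most one indivisible worst $1$-parameter subgroup inside the maximal torus $T$. If $\mu_{1}\neq\mu_{2}$ in $\Gamma(T)$ both attained the maximal value $B$ of $\mu(w,\,\cdot\,)/\Vert\,\cdot\,\Vert$, then, using that $\mu(w,\,\cdot\,)$ is given by a minimum over the state and hence superadditive, $\mu(w, \mu_{1}+\mu_{2})\geq\mu(w, \mu_{1})+\mu(w, \mu_{2})=B(\Vert\mu_{1}\Vert+\Vert\mu_{2}\Vert)$; the strict triangle inequality for the Euclidean norm on $\Gamma(T)\otimes\mathbb{R}$ would then force $\mu(w, \mu_{1}+\mu_{2})/\Vert\mu_{1}+\mu_{2}\Vert>B$ unless $\mu_{1}$ and $\mu_{2}$ are parallel with the same sign, and parallel indivisible $1$-parameter subgroups with positive $\mu$-value coincide. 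Applying this with $w=\phi_{d}(lg.x)$, the two members $\lambda_{g.x, d}$ and $\lambda_{lg.x, d}$ of $\Lambda_{lg.x, d}\cap\Gamma(T)$ are equal, which is the claim; the same reasoning applied to the common face of $\Delta_{g.x, d}$ and $\Delta_{lg.x, d}$ minimizing $\eta(\lambda_{g.x, d})$ also gives $h_{g.x, d}=h_{lg.x, d}$.

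The main obstacle is not any one computation but keeping the convex-geometric package $(\lambda_{\,\cdot\, , d}, h_{\,\cdot\, , d}, |\Delta_{\,\cdot\, , d}|)$ synchronized with Kempf's worst $1$-parameter subgroup: these agree only at a point of the $G$-orbit where $|\Delta_{\,\cdot\, , d}|$ is maximal, so one must first verify that $lg$ remains a maximizer before one is entitled to call $\lambda_{lg.x, d}$ a worst $1$-parameter subgroup. Once both $\lambda_{g.x, d}$ and $\lambda_{lg.x, d}$ are known to be worst $1$-parameter subgroups of $\phi_{d}(lg.x)$ lying in $T$, the uniqueness step finishes the proof with no further work.
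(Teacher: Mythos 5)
Your proof is correct and follows essentially the same route as the paper's: the monotonicity hypothesis $a_{i}\leq a_{i+1}$ puts every lower-triangular $l$ (and $l^{-1}$) in the parabolic $P(\lambda_{g.x,d})$, whence $\mu$ and the maximal normalized $\mu$-value are unchanged under $l$, and one concludes by uniqueness of the indivisible worst one-parameter subgroup inside $T$. You merely spell out two steps the paper delegates to citations of Kempf and Morrison--Kempf --- that $lg$ remains a maximizer of $|\Delta_{\,\cdot\,.x,d}|$, and the uniqueness argument via superadditivity of $\mu$ and the strict triangle inequality --- which is welcome extra care rather than a deviation.
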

\begin{proof}
Let $\lambda=\lambda_{g.x, d}$. For every lower triangular matrix $l\in H$, 
\begin{displaymath}
l^{-1}\in P(\lambda):= \{q\in H| \exists\lim_{t\rightarrow 0}\lambda(t)q\lambda(t)^{-1}\in H\}.
\end{displaymath}
Thus,
\begin{displaymath}
\mu(\phi_{d}(lg.x), \lambda)=\mu(\phi_{d}(g.x), l^{-1}\star\lambda)=\mu(\phi_{d}(g.x), \lambda)
\end{displaymath}
by \cite[Lemma 4.2.]{Ian}. Since the norm $\Vert\cdot\Vert$ on $\Gamma(G)$ is invariant under the conjugate action, $\lambda \in \Lambda_{lg.x, d}\cap \Gamma(T)$ so that $\lambda_{lg.x, d}=\lambda_{g.x, d}$ by \cite[Theorem 4.2.b)(4)]{Kempf}.
\end{proof}

\section{A Relation between Instability and Singularity}
From now on, let's assume \eqref{assumption} for Hilbert polynomial $P$. In previous sections, we proved that Hesselink stratifications of a Hilbert Scheme $\textup{Hilb}^{P}(\mathbb{P}^{r}_{k})$ of hypersurfaces is unique in some sense. In this section, we will show that for $x\in \textup{Hilb}^{P}(\mathbb{P}^{r}_{k})^{\textup{us}}_{d}$, there is a positive relation between the multiplicity $n_{H_{x}}$ of the hypersurface $H_{x}$ represented by $x$ and the Hesselink strata $E_{d, [\lambda], \delta}$ which contains $x$. We see that the multiplicity of the hypersurface $H_{x}$ at $e=[1:0:\ldots :0]\in \mathbb{P}^{r}_{k}$ determines a supporting hyperplane of $\Delta_{x, d}$. Here we choose the homogeneous coordinate $[x_{0}:x_{1}:\ldots:x_{r}]$ of $\mathbb{P}^{r}_{k}$. 
\begin{lemma}
\label{firststep}
In the above situation,
\begin{displaymath}
\Xi_{x, d}\cap x_{0}^{d-n_{e, X}+1}M_{n_{e, X}-1}=\emptyset
\end{displaymath}
and
\begin{displaymath}
\Xi_{x, d}\cap x_{0}^{d-n_{e, X}}M_{n_{e, X}}\neq\emptyset.
\end{displaymath}

\end{lemma}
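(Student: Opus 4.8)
The plan is to reduce the statement to elementary bookkeeping about the monomials of a single defining polynomial. First I would use the hypothesis \eqref{assumption} to make the relevant Hilbert point completely explicit: a lex-segment ideal gives $g_{P}=d$, and $Q(d)=h^{0}(\mathbb{P}^{r}_{k},\mathcal{O}_{\mathbb{P}^{r}_{k}}(d))-P(d)=\binom{r+d}{r}-\bigl(\binom{r+d}{r}-1\bigr)=1$. Hence $d\geq g_{P}$, the $d$'th Hilbert point is the first one in Gotzmann's sequence, and if $I$ denotes the saturated ideal of $X=H_{x}$ and $f\in S_{d}$ a generator of $I$, then $I_{d}=kf$ is a line and $\phi_{d}(x)=[I^{\wedge 1}]=[f]\in\mathbb{P}(\bigwedge^{1}S_{d})=\mathbb{P}(S_{d})$. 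In particular $M_{dQ(d)}=M_{d}$, each weight space $V_{m,d}^{P}$ is the line $km$, the expansion $\phi_{d}(x)=\bigl[\sum_{m}\phi_{d}(x)_{m}\bigr]$ is nothing but $f=\sum_{m\in M_{d}}f_{m}m$, and therefore $\Xi_{x,d}=\{\,m\in M_{d}\mid f_{m}\neq 0\,\}$ is exactly the set of monomials occurring in $f$.

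Next I would rephrase the multiplicity in the same language. Collecting $f$ by powers of $x_{0}$, write $f=\sum_{j=0}^{d}x_{0}^{\,d-j}g_{j}$ with $g_{j}\in k[x_{1},\dots,x_{r}]$ homogeneous of degree $j$; then the extrinsic description \eqref{mdef} says $n_{e,X}=\min\{\,j\mid g_{j}\neq 0\,\}$. Every monomial of $f$ has the form $x_{0}^{\,d-j}m'$ with $g_{j}\neq 0$ and $m'$ a monomial of $g_{j}$, so its $x_{0}$-exponent is exactly $d-j$; hence the largest power of $x_{0}$ dividing any monomial of $f$ equals $d-\min\{\,j\mid g_{j}\neq 0\,\}=d-n_{e,X}$.

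With $\Xi_{x,d}$ and $n_{e,X}$ so described, I would then observe that both assertions are immediate. If some $m$ lay in $\Xi_{x,d}\cap x_{0}^{\,d-n_{e,X}+1}M_{n_{e,X}-1}$, it would be a monomial of $f$ divisible by $x_{0}^{\,d-n_{e,X}+1}$, contradicting that $d-n_{e,X}$ is the maximal $x_{0}$-power occurring among monomials of $f$; so the first intersection is empty. For the second, picking $j_{0}=n_{e,X}$ (so $g_{j_{0}}\neq 0$) and any monomial $m'$ of $g_{j_{0}}$, the monomial $x_{0}^{\,d-n_{e,X}}m'$ lies both in $x_{0}^{\,d-n_{e,X}}M_{n_{e,X}}$ and in $\Xi_{x,d}$, so the second intersection is nonempty. (If $e\notin X$ then $n_{e,X}=0$ and both claims hold trivially, with $M_{-1}=\emptyset$ and $x_{0}^{d}\in\Xi_{x,d}$.)

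I do not expect a real obstacle here: the only point requiring care is the identification of $\Xi_{x,d}$ with the monomial support of $f$, which rests on the equality $Q(d)=1$ forced by \eqref{assumption}, together with the translation between the extrinsic multiplicity \eqref{mdef} and divisibility of the individual monomials of $f$. Once those are in place the rest is a direct reading of the definitions, and this lemma is really the bridge that will let the later arguments replace a geometric invariant of $X$ by a supporting hyperplane of $\Delta_{x,d}$.
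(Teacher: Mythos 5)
Your proof is correct and follows the same route as the paper, whose own proof simply declares the lemma trivial from the definitions of $\Xi_{x,d}$ and \eqref{mdef}; you usefully make explicit the two facts the paper leaves implicit, namely that $Q(d)=1$ under \eqref{assumption} so that $\phi_{d}(x)=[f]$ and $\Xi_{x,d}$ is exactly the monomial support of $f$, and that $n_{e,X}$ equals $d$ minus the largest power of $x_{0}$ occurring in a monomial of $f$. The only caveat is that \eqref{mdef} must be read as you read it (some monomial of $f$ is divisible by $x_{0}^{d-t}$, equivalently $n_{e,X}=\min\{j \mid g_{j}\neq 0\}$), not as $x_{0}^{d-t}$ dividing $f$ as a polynomial, which would give $\max\{j \mid g_{j}\neq 0\}$ instead.
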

\begin{proof}
It is trivial by the definition of $\Xi_{x, d}$ and \eqref{mdef}.
\end{proof}
Lemma~\ref{firststep} means that there is a relation between a state and the multiplicity of a fixed point. We know that the set $\{\Xi_{g.x}|g\in G\}$ determines the Hesselink strata $E_{d, [\lambda], \delta}$ which contains $x$ for any unstable Hilbert point $x$ and the subgroup of $G$ generated by upper triangular matrices and permutation matrices acts on $\mathbb{P}^{r}_{k}$ transitively, whose dual action is induced by the canonical action on $V_{d}^{P}$. $n_{H_{x}}$ is a geometric invariant, so it is independent under the choice of coordinate. These facts lead us to
\begin{lemma}
\label{mainlem}
For any choice of unstable $x\in\textup{Hilb}^{P}(\mathbb{P}^{r}_{k})$, there is $g_{x}\in G$ satisfying
\begin{displaymath}
|\Delta_{g_{x}.x, d}|=\max_{h\in G}|\Delta_{h.x, d}|
\end{displaymath}
and
\begin{displaymath}
n_{H_{x}}=n_{H_{g_{x}.x}}=n_{e, H_{g_{x}.x}}.
\end{displaymath}
\end{lemma}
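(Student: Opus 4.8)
The plan is to obtain $g_x$ by starting from an optimizer for the problem in Theorem~\ref{base} and then modifying it, within the optimality-preserving moves supplied by Lemma~\ref{preserving}, until the hypersurface it produces attains its maximal multiplicity at $e=[1:0:\cdots:0]$. The first identity $n_{H_x}=n_{H_{g_x.x}}$ will be automatic for \emph{any} choice of $g_x$: the number $n_H$ is a geometric invariant, hence unchanged under the $G$-action on $\mathbb{P}^r_k$. So everything reduces to finding one optimizer $g_x$ with $n_{H_{g_x.x}}=n_{e,H_{g_x.x}}$.

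First I would fix, by Theorem~\ref{base}, some $g_0\in G$ with $|\Delta_{g_0.x,d}|=\max_{h\in G}|\Delta_{h.x,d}|$, and set $\lambda=\lambda_{g_0.x,d}$. Left-multiplying $g_0$ by a permutation matrix (which changes neither $|\Delta_{g_0.x,d}|$ nor the class $[\lambda]$, by equivariance of $\mu$ and Weyl-invariance of $\Vert\cdot\Vert$) I may assume $\eta(\lambda)$ has non-decreasing coordinates, so that Lemma~\ref{preserving} applies: every $lg_0$ with $l\in H$ lower triangular is again an optimizer carrying the same worst one-parameter subgroup $\lambda$. Combined with the block permutations that fix $\eta(\lambda)$ (permuting coordinates of equal $\lambda$-weight, and so again preserving the sorted $\eta(\lambda)$), these moves show that $Pg_0$ consists of optimizers, where $P$ is the parabolic of $G$ generated by the lower triangular matrices together with those block permutations; conjugating the whole picture by the permutation matrix $w_0$ reversing the coordinate order yields the mirror statement, so $w_0Pg_0$ also consists of optimizers. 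Now choose a point $p$ realizing the maximal multiplicity of $H_{g_0.x}$, i.e.\ $n_{p,H_{g_0.x}}=n_{H_{g_0.x}}=n_{H_x}$. If one can find $h\in P\cup w_0P$ carrying $p$ to $e$ under the canonical action on $\mathbb{P}^r_k$, then $g_x:=hg_0$ is the desired element: it is an optimizer, and $n_{e,H_{g_x.x}}=n_{p,H_{g_0.x}}=n_{H_x}$ by invariance of multiplicity.

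The delicate point is precisely this last step. The $G$-action on $\mathbb{P}^r_k$ is transitive, but the orbit of $e$ under $P\cup w_0P$ is in general a proper subset of $\mathbb{P}^r_k$ (it omits the projectivization of the span of the coordinates whose $\lambda$-weight is not extremal), so transitivity of the full group does not by itself suffice. I expect the resolution to come from the interaction between the singularity and the worst one-parameter subgroup recorded in Lemma~\ref{firststep}: for a translate of $x$ whose hypersurface has maximal multiplicity $n$ at $e$, the state lies in the halfspace $\{\deg_{x_0}\le d-n\}$ and meets its boundary, and this forces a worst one-parameter subgroup of that translate to place $e$ in its minimal (or maximal) weight eigenspace; equivalently, among the optimizers one may choose $g_0$ and the point $p$ compatibly, so that $e_0$ sits in an extremal weight block of $\lambda$, after which $h\cdot p=e$ becomes solvable with $h\in P$. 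Making this compatibility precise — reconciling the maximization of $|\Delta_{g.x,d}|$ with the location of a maximal-multiplicity point of $H_{g.x}$, using that $\{\Xi_{g.x,d}\}_{g\in G}$ determines the stratum together with the transitivity of the subgroup generated by the upper triangular and permutation matrices — is the main obstacle, and once it is settled the argument closes exactly as above.
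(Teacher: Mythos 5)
Your proposal follows the paper's strategy up to and including the use of Lemma~\ref{preserving} (fix an optimizer $g_0$, sort $\eta(\lambda)$ by a permutation, observe that lower-triangular $l$ preserve optimality), but it then stalls at exactly the step you flag as ``the main obstacle,'' and the speculative resolution you sketch (forcing the maximal-multiplicity point into an extremal weight block of $\lambda$ via Lemma~\ref{firststep}) is not needed and is not how the argument closes. The obstacle is an artifact of an unnecessary restriction you impose on yourself: you only allow permutations that preserve the sorted $\eta(\lambda)$ (block permutations, plus the single reversal $w_0$), because you are trying to keep the hypotheses of Lemma~\ref{preserving} valid throughout. But the conclusion of Lemma~\ref{mainlem} only asks that $g_x$ be an optimizer, i.e.\ that $|\Delta_{g_x.x,d}|=\max_h|\Delta_{h.x,d}|$; it says nothing about $\lambda_{g_x.x,d}$ being sorted. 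Left multiplication by an \emph{arbitrary} permutation matrix $q$ preserves the value $|\Delta_{\cdot,d}|$ (the state polytope is permuted, while the center $\xi_d$ and the Euclidean norm are Weyl-invariant), so it preserves optimality unconditionally, with no sortedness hypothesis required.

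The paper's actual closing move is: pick $y=[b_0:\cdots:b_r]$ with $n_{y,H_{g_0.x}}=n_{H_{g_0.x}}$, let $j$ be the largest index with $b_j\neq 0$, and build an invertible lower-triangular $l$ whose $j$-th row is $(b_0,\ldots,b_j,0,\ldots,0)$; then $e_j.l=y$, and Lemma~\ref{preserving} (applied to $l$, where the sortedness hypothesis \emph{is} in force) makes $lg_0$ an optimizer. Finally take $q$ to be the transposition of $0$ and $j$ --- an arbitrary permutation, not a block permutation --- so that $e.(ql)=(e.q).l=e_j.l=y$ and hence $n_{e,H_{qlg_0.x}}=n_{y,H_{g_0.x}}=n_{H_x}$, while $qlg_0$ remains an optimizer because $q$ is applied \emph{after} $l$ and only needs Weyl-invariance of $|\Delta_{\cdot,d}|$. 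In this way every point of $\mathbb{P}^r_k$ is reachable from $e$ by a composite $ql$ whose $l$-part still satisfies the hypotheses of Lemma~\ref{preserving}, and the orbit deficiency you identified disappears. As written, your argument establishes the lemma only when the maximal-multiplicity point happens to lie in an extremal weight block, so the gap is genuine, though the repair is short.
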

\begin{proof}
By Lemma~\ref{base}, there is a $g'$ satisfying 
\begin{displaymath}
|\Delta_{g'.x, d}|=\max_{h\in G}|\Delta_{h.x, d}|.
\end{displaymath}
Without loss of generality, we can guess that
\begin{displaymath}
\lambda_{g'.x, d}=(a_{0}, a_{1}, \ldots, a_{r})
\end{displaymath}
satisfies $a_{i}\leq a_{i+1}$ for all $0\leq i<r$. If it doesn't, we may take $wg'$ instead of $g'$ for some permutation matrix $w$. There is $y=[b_{0}: b_{1}: \ldots : b_{r}]\in H_{g'.x}$ satisfying $n_{H_{g'.x}}=n_{y, H_{g'.x}}$. We can choose a lower-triangular matrix $l$ such that some row of $l$ is equal to a representative vector $(b_{0}, b_{1}, \ldots , b_{r})$ of $y$. By Lemma~\ref{preserving},
\begin{displaymath}
|\Delta_{lg'.x, d}|=\max_{h\in G}|\Delta_{h.x, d}|.
\end{displaymath}
There is a permutation matrix $q$ satisfies $e.(ql)=(eq).l=y$ by the construction.
Now $g_{x}=qlg'$ has every desired property.
\end{proof}
Choosing $g_{x}\in G$ as in Lemma~\ref{mainlem} for a fixed Hilbert point $x\in E_{d, [\lambda], \delta}$, we can compare $n_{H_{x}}$ and the pair $([\lambda], \delta)$. 
\begin{theorem}
\label{result}
Suppose $x\in E_{d, [\lambda], \delta}$ for a unstable Hilbert point $x\in \textup{Hilb}^{P}(\mathbb{P}^{r}_{k})$ where
\begin{displaymath}
\lambda=(a_{0}, a_{1}, \ldots, a_{r})\in\Gamma(T)
\end{displaymath}
satisfying $\sum_{i=0}^{r}a_{i}=0$. If $b=\max_{0\leq i\leq r}a_{i}$ and $a=\min_{0\leq i\leq r}a_{i}$, then
\begin{equation}
\label{resulteqn}
\frac{\Vert\lambda\Vert\delta-ad}{b-a}\leq n_{H_{x}}\leq\frac{rd}{r+1}-\delta\frac{a}{\Vert\lambda\Vert}.
\end{equation}
\end{theorem}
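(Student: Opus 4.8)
\emph{Proof proposal.} The plan is to replace $x$ by a carefully chosen translate $g_{x}.x$, after which \eqref{resulteqn} falls out of the geometry of the state polytope. First I would fix $g_{x}\in G$ as in Lemma~\ref{mainlem}, so that $|\Delta_{g_{x}.x,d}|=\max_{h\in G}|\Delta_{h.x,d}|$ and $n_{H_{x}}=n_{e,H_{g_{x}.x}}=:n$; moreover I would arrange that $\eta(\lambda_{g_{x}.x,d})$ is non-decreasing, so that its $0$-th entry equals $a=\min_{i}a_{i}$. Since $Q(d)=1$ we have $\phi_{d}(g_{x}.x)=[f]$ for the degree-$d$ generator $f$ of the ideal of $g_{x}.x$, and $\Xi_{g_{x}.x,d}=\{m\in M_{d}\mid f_{m}\neq 0\}$. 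By Theorem~\ref{base}, $\delta=|\Delta_{g_{x}.x,d}|$ and $\lambda^{*}:=\lambda_{g_{x}.x,d}\in[\lambda]$, so $\eta(\lambda^{*})$ is the non-decreasing rearrangement of $(a_{0},\dots,a_{r})$: its entries satisfy $a=\eta(\lambda^{*})_{0}\leq\cdots\leq\eta(\lambda^{*})_{r}=b$, sum to $0$, and $\Vert\lambda^{*}\Vert=\Vert\lambda\Vert$. Finally, Lemma~\ref{firststep} tells us that every monomial of $\Xi_{g_{x}.x,d}$ has $x_{0}$-degree at most $d-n$, and that some $m_{0}\in\Xi_{g_{x}.x,d}$ has $x_{0}$-degree exactly $d-n$.

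For the right-hand inequality I would argue geometrically. Put $\xi=\frac{d}{r+1}\mathbbm{1}$, the barycentre of the simplex $\{x\in\mathbb{R}^{r+1}\mid x_{i}\geq 0,\ \sum_{i}x_{i}=d\}$, which contains $\Delta:=\Delta_{g_{x}.x,d}$. Because $\eta(\gamma)\perp\mathbbm{1}$ for every $\gamma\in\Gamma(T)$ and $\Delta$ lies in the affine hyperplane $\{\sum_{i}x_{i}=d\}$, \eqref{asymptotic} identifies $\delta$ with the Euclidean distance from $\xi$ to $\Delta$, attained in the direction $\eta(\lambda^{*})/\Vert\lambda\Vert$; hence the point of $\Delta$ nearest $\xi$ is $h=\xi+\frac{\delta}{\Vert\lambda\Vert}\eta(\lambda^{*})$. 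Since $h\in\Delta$, Lemma~\ref{firststep} forces its $0$-th coordinate $h_{0}=\frac{d}{r+1}+\frac{\delta}{\Vert\lambda\Vert}a$ to be at most $d-n$, and rearranging gives $n\leq\frac{rd}{r+1}-\frac{\delta a}{\Vert\lambda\Vert}$. (This half uses only $\eta(\lambda^{*})_{0}\geq a$, so it does not actually need the non-decreasing normalisation.)

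For the left-hand inequality I would use the explicit formula for $\mu$. Since $\lambda^{*}$ is a worst $1$-parameter subgroup of $\phi_{d}(g_{x}.x)$, the identity following \eqref{asymptotic} gives $\Vert\lambda\Vert\delta=\mu(\phi_{d}(g_{x}.x),\lambda^{*})=\min_{m\in\Xi_{g_{x}.x,d}}\langle\eta(\lambda^{*}),m\rangle$, where $\langle\eta(\lambda^{*}),m\rangle=\sum_{i=0}^{r}\eta(\lambda^{*})_{i}c_{i}$ for $m=\prod_{i}x_{i}^{c_{i}}$. Evaluating at $m_{0}$, and using that $\eta(\lambda^{*})_{0}=a$, that the remaining exponents of $m_{0}$ sum to $d-(d-n)=n$, and that $\eta(\lambda^{*})_{i}\leq b$ for all $i$, we obtain $\Vert\lambda\Vert\delta\leq a(d-n)+bn=ad+(b-a)n$. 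As $b-a>0$ (because $\lambda\neq 0$ and $\sum_{i}a_{i}=0$), this rearranges to $n\geq\frac{\Vert\lambda\Vert\delta-ad}{b-a}$, which finishes the proof.

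The main obstacle will be the reduction in the first paragraph, not the two short estimates above. Lemma~\ref{mainlem} already supplies a $g_{x}$ that maximises $|\Delta_{\cdot.x,d}|$ and realises $n_{H_{x}}$ as the multiplicity at $e$, but I also need the induced worst $1$-parameter subgroup $\lambda_{g_{x}.x,d}$ to be non-decreasing; the hard part is to do all three at once. I expect this to follow by upgrading the proof of Lemma~\ref{mainlem}: replace the permutation $q$ there by an element of the parabolic $P(\lambda_{g'.x,d})$ attached to the non-decreasing worst $1$-PS of $g'.x$. Since the first column of an element of that parabolic is unconstrained, any point of $\mathbb{P}^{r}_{k}$ — in particular a maximum-multiplicity point — can be carried to $e$, and by the $\mu$-invariance established in the proof of Lemma~\ref{preserving} the worst $1$-PS is unchanged, so its $0$-th entry remains $a$. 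Checking that this still works when the chosen maximum-multiplicity point lies on coordinate hyperplanes is the one delicate point; everything after it is the one-line convexity estimate and the one-line combinatorial estimate given above. (One can also verify on examples such as $f=x_{0}^{d-n}x_{1}^{n}$ or $f=x_{1}^{d}$ that both ends of \eqref{resulteqn} are then attained, accounting for the claimed sharpness.)
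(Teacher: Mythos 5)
Your right-hand estimate is exactly the paper's: $h_{x,d}=\frac{d}{r+1}\mathbbm{1}+\frac{\delta}{\Vert\lambda\Vert}\eta(\lambda^{*})\in\Delta_{x,d}$ plus Lemma~\ref{firststep} forces $\frac{d}{r+1}+\frac{\delta}{\Vert\lambda\Vert}\eta(\lambda^{*})_{0}\leq d-n$, and, as you note, this only needs $\eta(\lambda^{*})_{0}\geq a$, so Lemma~\ref{mainlem} as stated suffices. The problem is the left-hand estimate, and it sits precisely in the reduction you flag as ``the main obstacle.'' Your inequality $\Vert\lambda\Vert\delta\leq a(d-n)+bn$ evaluates the worst weight at a monomial of $x_{0}$-degree $d-n_{e,H_{g_x.x}}$ with $n_{e,H_{g_x.x}}=n_{H_{x}}$, so you need $e$ to be \emph{simultaneously} a maximal-multiplicity point and the coordinate point of minimal $\lambda^{*}$-weight. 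Your proposed fix --- carry the maximal-multiplicity point $y$ to $e$ by an element $p$ of the parabolic $P(\lambda_{g'.x,d})$ --- does not go through: with the paper's convention the point $e$ of $H_{pg'.x}$ corresponds to $e.p$, i.e.\ to the $0$-th \emph{row} of $p$, and for $p\in P(\lambda)$ with $\eta(\lambda)$ non-decreasing that row is forced to be supported on $\{j\mid a_{j}=a\}$ (it is the $0$-th \emph{column} that is unconstrained, and the column is not what moves points to $e$). So this device only works when $y$ already lies in the span of the minimal-weight coordinate points, and you have not shown that a maximal-multiplicity point can be found there; that claim is nontrivial and is nowhere established.

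The paper sidesteps this entirely by never asking the two roles of $e$ to coincide. For the lower bound it fixes any index $j$ with $a_{j}=a$ and observes that for every $(b_{0},\dots,b_{r})\in\Delta_{x,d}$ one has $\delta\Vert\lambda\Vert\leq\sum_{i}a_{i}b_{i}\leq ab_{j}+b(d-b_{j})$, hence $d-b_{j}\geq\frac{\Vert\lambda\Vert\delta-ad}{b-a}$; this says the multiplicity of $H_{x}$ at the coordinate point $e_{j}$ is at least the left-hand side of \eqref{resulteqn}, and then $n_{H_{x}}\geq n_{e_{j},H_{x}}$ finishes the proof (phrased in the paper as a contradiction with the maximality of $n_{H_{x}}$ after transposing $0$ and $j$). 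Your computation becomes correct, and essentially identical to the paper's, if you run it at a monomial maximizing the $x_{j}$-exponent for such a $j$ and replace ``$n$ equals the multiplicity at $e$'' by ``$n_{H_{x}}$ is at least the multiplicity at $e_{j}$.'' As written, however, the reduction your left-hand inequality relies on is unproved and your sketch of it fails.
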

\begin{proof}
By Lemma~\ref{mainlem}, we can assume that 
\begin{displaymath}
|\Delta_{x, d}|=\max_{h\in G}|\Delta_{h.x, d}|
\end{displaymath}
and
\begin{displaymath}
n_{H_{x}}=n_{e, H_{x}}.
\end{displaymath}
Without loss of generality, $\lambda\in\Lambda_{x, d}$. It it doesn't, we may choose a permutation matrix $q$ satisfying $q\star\lambda\in\Lambda_{x, d}$ and let $q\star\lambda$ be another representative of $[\lambda]$. It is possible because $x\in E_{d, [\lambda], \delta}$, by Theorem~\ref{base}. By definition, $\Delta_{x, d}$ contains 
\begin{displaymath}
h_{x, d}=\frac{d}{r+1}\mathbbm{1}+\frac{\delta}{\Vert\lambda\Vert}\eta(\lambda).
\end{displaymath}
Since $\Delta_{x, d}$ is the convex hull of $\Xi_{x, d}$, Lemma~\ref{firststep} and our assumption on $x$ imply that
\begin{displaymath}
n_{H_{x}}=n_{e, H_{x}}\leq d-\frac{d}{r+1}-\frac{\delta a}{\Vert\lambda\Vert}=\frac{rd}{r+1}-\delta\frac{a}{\Vert\lambda\Vert}
\end{displaymath}
if we consider the maximum value of the degree of $x_{0}$ in each monomial $m\in\Xi_{x, d}$. Now suppose 
\begin{displaymath}
n_{H_{x}}<\frac{\Vert\lambda\Vert\delta-ad}{b-a},
\end{displaymath}
then for all $0\leq j\leq r$ satisfying $a=a_{j}$ and for all $(b_{0}, \ldots, b_{r})\in\Delta_{x, d}$,
\begin{displaymath}
b(d-b_{j})+a(b_{j}-d)+ad\geq\sum_{i=0}^{r}a_{i}b_{i}\geq\delta\Vert\lambda\Vert
\end{displaymath}
by \eqref{asymptotic} so that
\begin{displaymath}
d-b_{j}\geq \frac{\Vert\lambda\Vert\delta-ad}{b-a} >n_{H_{x}}.
\end{displaymath}
This means that $n_{e, H_{p.x}}>n_{H_{x}}=n_{H_{p.x}}$, for the transposition matrix $p$ which permutes $0$ and $j$. This is a contradiction.
\end{proof}
If $\lambda=p\star(-r, 1, \ldots, 1)\in\Gamma(T)$ for some permutation matrix $p\in G$, then we see that
\begin{displaymath}
\frac{\Vert\lambda\Vert\delta-ad}{b-a}=\frac{rd}{r+1}-\delta\frac{a}{\Vert\lambda\Vert}
\end{displaymath}
so that $n_{H_{x}}$ must be a fixed value by \eqref{resulteqn}. We also see that 
\begin{displaymath}
\frac{\Vert\lambda\Vert\delta-ad}{b-a}> \frac{d}{r+1}.
\end{displaymath}
This implies that every unstable hypersurface of degree $d\geq r+1$ is singular. This is also a weaker version of \cite[Proposition 4.2., Chapter 3]{GIT}. \eqref{resulteqn} has been derived by the existence of certain coordinate but smoothness of $x\in\textup{Hilb}^{P}(\mathbb{P}^{r}_{k})$ requires a general property of each state polytope in $\{\Delta_{g.x, d}|g\in G, |\Delta_{g.x, d}|=\max_{h\in G}|\Delta_{h.x, d}|\}$. However, we can check that \eqref{resulteqn} is sharp. If $r=3$, $d=4$ and $\phi_{d}(x)=[x_{0}^{4}]\in\mathbb{P}(k[x_{0}, x_{1}, x_{2}, x_{3}]_{4})$ then $\lambda=(3, -1, -1, -1)\in\Lambda_{x, d}$ so that 
\begin{displaymath}
\frac{\Vert\lambda\Vert\delta-ad}{b-a}=\frac{rd}{r+1}-\delta\frac{a}{\Vert\lambda\Vert}=4=n_{H_{x}}.
\end{displaymath}
\bibliographystyle{plain}
\bibliography{Sing}
\end{document}